\newtheorem{theorem}{Theorem}[section]
\newtheorem{example}{Example}[section]
\newtheorem{lemma}{Lemma}[section]
\newtheorem{remark}{Remark}[section]
\numberwithin{equation}{section}
\newcommand{\R}{\mathbb{R}}
\title[degenerate Stochastic differential equations]
{A Class of degenerate Stochastic differential equations with non-Lipschitz
coefficients}
\author{K. Suresh Kumar }
\address{Department of Mathematics, Indian Institute of Technology Bombay,
Mumbai-400076, India.}
\email{suresh@math.iitb.ac.in}
\date{}
\begin{document}
\abstract
We obtain sufficient condition for SDEs to evolve in the positive orthrant. 
We use comparison theorem arguments to achieve this. As a result 
we prove the existence of a unique strong solution for a class of multidimensional 
degenerate SDEs with non-Lipschitz diffusion coefficients. 
\endabstract

\keywords{Degenerate SDEs, non-Lipschitz coefficients, comparison theorems}

\subjclass[2000]{60H10, 60G17 }
\maketitle

\section{Introduction}
In this article we consider (possibly degenerate) stochastic differential equations 
(SDEs) with non-Lipshitz coefficients. If the coefficients are Lipschitz, we can prove 
the existence of a unique strong solution (see \cite{nobu}). But uniqueness fails in the 
case of non-Lipschitz coefficients.  The literature on this topic is not very extensive. 
However, in one dimensional case, there is an extensive literature (see \cite{chen}). 

If the coefficients are continuous, then weak solution exists upto an explosion time 
(see \cite{nobu}, pp. 155 - 163).  Under additional linear
growth conditions,  weak solution exists for all time. 
In the case of non-degenerate diffusion coefficient, Stroock and Varadhan proved the
existence of a unique weak solution for SDE with  bounded and mesurable drift and with 
bounded and continous
diffusion coefficient (see \cite{sv} Theorem 4.2, 5.6).
Krylov  \cite{kry}  relaxed the continuity assumption on diffusion coefficient and proved
that there is a unique weak solution  for $n \leq 2$, and for $n > 2$,  the sde has a 
weak solution.  Engelbert and Schmidt studied the sde in one dimensional case and formualted 
necessary and sufficient conditions to prove the existence of unique weak solution 
(see \cite{chen} for  details).

The existence of a unique strong solution is known when the coefficients are locally Lipschitz 
continuous with linear growth. If the drift coefficient is bounded, measurable and 
diffusion coefficient is  Lipschitz continuous and non-degenerate, then existence of  
a unique strong solution is known (see \cite{z} for the one dimensional case and
\cite{v} for the multidimensional case). Zvokin in \cite{z}, proved the result if the  
diffusion coefficient is Holder continuous  with exponent $\tfrac{1}{2}$.  

To the best of our 
knowledge, the above results are not known in the multidimensional case.
However there are some partial results. Swart in \cite{swart} proved the  pathwise uniqueness of an 
SDE evolving in unit ball in $\R^n$ with diffusion coefficient locally Lipschitz in
the  interior of the unit ball and Holder continuous with exponent
$\frac{1}{2}$ on the boundary. Fang and Zhang in \cite{fz} studied the
existence of  unique strong solution in the case when the
Lipschitzian asumption on the coefficients are relaxed by a
logarithmic factor. This still does not give results for the
Holder continuous class, analogous to the one dimensional case. 
The technique used in \cite{fz} does not seem to be working with 
Holder continuous coefficients. In this article we attempt to study the SDE with
Holder continuous coeeficients. Our approach is to use comparsion type arguments. 
To the best of our knowledge, this approach is not used to prove the existence of 
strong solutions in multi-dimensional case. We now briefly discuss the content of 
our article.
 
Consider the SDE
\begin{equation} \label{s1sde1}
\left\{
\begin{array}{lll}
d X(t)& = & \mu(X(t)) \, dt \, + \, \sigma(X(t)) \, d W(t)\\
X(0) & = & x \in \R^n_{+},
\end{array}
\right.
\end{equation}
where $\R^n_{+}  =  \{ x \in \R^n \, | \, x_i > 0 , \ \forall i \}$, 
the positive orthant. Using  comparison technique,
we obtain sufficient condition for any solution $X(\cdot)$ of (\ref{s1sde1})
to evolve in the positive orthrant, $\R^n_{+}$. As a corrollary, we obtain the
existence of a unique strong solution to (\ref{s1sde1}) under local Lipschitz
assumptions in $\R^n_{+}$ on the coefficients of the SDE. As an example we
establish the existence of a unique strong solution to the SDE 
\begin{equation} \label{s1sde2}
\left\{
\begin{array}{lll}
dX_i(t) & = & \mu_i(X(t)) \, dt \ + \ \sqrt{|X_i(t)|}
\sum^n_{i,j=1}
\sigma_{ij}(X(t)) \, d W_j(t) \\
X(0) & = & x \in \R^n_{+}, \ i =1, \cdots , n \, .
\end{array}
\right.
\end{equation}
The SDE (\ref{s1sde2}) is studied in \cite{siva,basp} in connection with super-Markov
chains. These authors have  proved the existence of a unique weak solution and left the
existence of a unique strong solution as an open problem. In this article, we are able 
to give an answer to this problem.

The equation (\ref{s1sde2}) can also be seen as a multidimensional version of CIR model 
in mathematical finance. The solution of sde (\ref{s1sde1}) or in particular (\ref{s1sde2}) remaining in
positive orthrant have a great significance in finance. In \cite{delhir}, the authors studied 
the positivity of a class of one dimensional
SDEs and obtain the nonarbitarge nature of the SDE. Typically 
the price movements of a collection of assets
(particularly equities) are modeled by certain SDEs. The
solutions of such SDEs must evolve in the positive orthant.
Indeed if the asset prices are allowed to hit zero, the model
creates arbitrage possibility. It may be emphasized  that in the
mathematical finance literature, effectively there is only one
class of models which captures the above positivity phenomenon,
viz.,the multidimensional version of the Black-Scholes model and
its nonlinear versions. Note that for modeling short rates, there
exists a variant of Bessel processes, the so called CIR model for
short term interest rates. Multidimentional version of CIR model
has not been studied in the mathematical finance literature. A
possible reason may be the non-availability of a sufficient
condition that ensures the evolution of the process given by the
multidimentional CIR model in the positive orthant.

We also show that our comparison technique can be used for other classes of SDEs by applying
to an example. More precisely we consider the SDE
\begin{equation}\label{s1sde3}
\left\{
\begin{array}{lll}
d \, X(t) & = & c (\theta - X(t)) dt \, + \, \sqrt{2 (1- \|X(t)\|^2)} \, d W(t) \\
X(0) & = & x \in B(0, \, 1) \subseteq \R^n \, ,
 \end{array}
\right.
\end{equation}
where $B(0, 1) \ = \ \{ x \in \R^n \, : \, \|x \| < 1\}$. 
We show that under the assumption $c(1- \sqrt{ n} \, |\theta | ) \geq 2$, the SDE (\ref{s1sde3})
has a unique strong solution evolving in $B(0, \, 1)$. The SDE (\ref{s1sde3}) is studied by Swart in
\cite{swart} for the case $\theta = 0$. The method used in \cite{swart} doesn't seems to work for 
$\theta \neq 0$.

Our paper is organized as follows: In Section 2, we prove the
existence of a unique strong solution to the SDE \eqref{sde1}
under the assumption (A1).   Section 3 contains a study of SDE 
in positive orthrant. We apply our main result in Section 3 to study
a multidimensional variant of Bessel process in Section 4. We apply our
technique developed in Section 3 to study an SDE in unit Ball in Section 5. 
We conclude our article with some conclusions.

\section{An Auxialary Result}
Consider the (possibly) degenerate SDE 
\begin{equation}
\begin{cases}
dX(t)= \mu(X(t)) dt + \sigma(X(t) \, dW(t) \\[2mm]  
X(0)  =  x \in \R^n 
\end{cases}\label{sde1}
\end{equation}
\noindent where $\mu: \R^n \rightarrow \R^n, \ \sigma : \R^n
\rightarrow \R^{n \times n}$ and $W(\cdot)$ is an $\R^n$-valued Wiener process.
In this section, we study the existence and uniqueness of a strong solution 
to the SDE \eqref{sde1}. We prove this under the following assumptions: 

\begin{itemize}

\item[{\bf{(A1)(i)}}] The function $\mu$ is Lipschtiz continuous on
compact subsets of $\R^n$.

\item[{\bf{(ii)}}] There exists $\epsilon > 0$,  a strictly increasing
function $\rho : [0, \ \infty) \to \R$ satisfying $\rho (0) \ = \
0$ and $ \int^t_0 \frac{1}{{\rho}^2 (s)} \, ds  \ = \ \infty$
such that
\[
| \sigma_{ij} (x) \, - \, \sigma_{ij}(y) | \ \leq \ C_R \, \rho
(|x_i - y_i |), 
\]
for some $C_R > 0$ and for all $x, y \in B(0, R)$ such that 
$\|x-y \| \leq \epsilon$.

\item[{\bf{(iii)}}] The functions $\mu$ and $\sigma$ have linear growth.

\end{itemize}

Following [\cite{nobu}, p. 182-183], we introduce some notation. Define
$\phi_k:\R\rightarrow\R$
as follows:

Let $1= s_0 > s_1 > s_2 > \cdots > s_k > \cdots > 0$ be such that
\[
 \int^{s_{k-1}}_{s_k} \frac{1}{\rho^2 (t)} \, dt
\ = \ k , k \geq 1\, .
\]
Let $\psi_k$ be continuous functions with supp$(\psi_k) \subseteq
(s_k , \ s_{k-1}) , k  \geq 1$, and
\[
0 \leq \psi_k (s) \leq \frac{2}{\rho^2(s) k} , \
\int^{s_{k-1}}_{s_k} \psi_k (s) \, ds\, = \, 1.
\]
Set
\[
\phi_k (t) \ = \  \int^{|t|}_0 \int^s_0 \psi_k
(\theta) \, d \theta \,
d s  \ \forall t \in \R
\]
Then $\phi_k$ satisfies
\begin{itemize}
\item[(i)] $ \phi_k \in C^2(\R)$
\item[(ii)] $0\leq \phi_k^{'}(t)\leq 1 $ \item[(iii)] $0\leq
\phi_k^{''}(t)\leq \frac{2}{k\rho^2(|t|)},~~~t\in \R , t \neq 0$
\item[(iv)]$\phi_k(t)~ \uparrow~ |t|$ as $k\rightarrow \infty.$
\end{itemize}
Now define $\bar{\phi}_k : \R^n \to \R $  by
\[
\bar{\phi}_k (x) \ = \ \sum^n_{i=1} \phi_k (x_i) \, .
\]
Then $\bar{\phi}_k$ has the following properties induced from
$\phi_k$.
\begin{itemize}
\item[(i)] $\bar{\phi}_k\in C^2(\R^n)$
\item[(ii)] $\nabla
\hat{\phi}_k(x) \ = \ (\phi_k^{'}(x_1),\cdots, \phi^{'}(x_n))$ ,
 \[
D^2\bar{\phi}_k(x) \ = \ \left[\begin{array}{llll}
\phi_k^{''}(x_1) & 0& \cdots& 0\\
0 & \phi^{''}_k (x_2)& \cdots& 0 \\
 & & \cdots & \\
 0& 0& \cdots & \phi^{''}_k(x_n)
\end{array}\right]
\]
\item[(iii)] $\bar{\phi}_k(x)~~ \uparrow ~~   \|x \| \, = \, |x_1|
+ \cdots + |x_n| $ as $k\rightarrow \infty$.
\end{itemize}

Our first result in this section is the following pathwise uniqueness result.

\begin{lemma}
Assume (A1). Then pathwise uniqueness  holds for \eqref{sde1}.

\end{lemma}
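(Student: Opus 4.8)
The plan is to use the classical Yamada–Watanabe type argument adapted to the multidimensional, componentwise structure encoded in the functions $\phi_k$ and $\bar\phi_k$. Let $X(\cdot)$ and $Y(\cdot)$ be two solutions of \eqref{sde1} driven by the same Wiener process $W(\cdot)$ with the same initial condition $x$, defined on a common filtered probability space. The goal is to show $X(t)=Y(t)$ for all $t\ge 0$ almost surely. First I would apply It\^o's formula to $\bar\phi_k(X(t)-Y(t))=\sum_{i=1}^n\phi_k(X_i(t)-Y_i(t))$. Since $\phi_k\in C^2(\R)$ and $\phi_k(0)=0$, each drift term contributes $\phi_k'(X_i(t)-Y_i(t))\bigl(\mu_i(X(t))-\mu_i(Y(t))\bigr)$ and each second-order term contributes $\tfrac12\phi_k''(X_i(t)-Y_i(t))$ times the $i$-th diagonal entry of the diffusion-difference matrix, namely $\sum_{j=1}^n\bigl(\sigma_{ij}(X(t))-\sigma_{ij}(Y(t))\bigr)^2$. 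The martingale term vanishes in expectation, so I would take expectations to obtain an estimate for $E\,\bar\phi_k(X(t)-Y(t))$.

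The two remaining terms are controlled separately. For the drift term I would use the local Lipschitz continuity of $\mu$ from (A1)(i) together with the bound $0\le\phi_k'\le 1$; after a standard localization by stopping times $\tau_R=\inf\{t:\|X(t)\|\vee\|Y(t)\|\ge R\}$, this produces a contribution bounded by $C_R\int_0^t E\,\|X(s\wedge\tau_R)-Y(s\wedge\tau_R)\|\,ds$, which is comparable to $C_R\int_0^t E\,\bar\phi_k(X(s\wedge\tau_R)-Y(s\wedge\tau_R))\,ds$ up to error terms controlled via property (iv) of $\phi_k$. For the diffusion term I would invoke the key property (iii), $0\le\phi_k''(t)\le \tfrac{2}{k\rho^2(|t|)}$, together with assumption (A1)(ii): on the set $\|X-Y\|\le\epsilon$ one has $|\sigma_{ij}(X)-\sigma_{ij}(Y)|\le C_R\,\rho(|X_i-Y_i|)$, so each squared term is bounded by $C_R^2\,\rho^2(|X_i-Y_i|)$. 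Crucially, in the $i$-th summand the argument of $\phi_k''$ is exactly $X_i-Y_i$, matching the $\rho^2(|X_i-Y_i|)$ coming from the diffusion difference, so the product $\phi_k''(X_i-Y_i)\,\rho^2(|X_i-Y_i|)$ is bounded by $\tfrac{2}{k}\,C_R^2$. Summing over $i$ and $j$ gives a contribution of order $\tfrac{C_R}{k}\,t$, which vanishes as $k\to\infty$.

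Combining these, on $[0,\tau_R]$ one obtains, after letting $k\to\infty$ and using $\bar\phi_k(z)\uparrow\|z\|$ by property (iii) of $\bar\phi_k$ together with monotone/dominated convergence,
\[
E\,\|X(t\wedge\tau_R)-Y(t\wedge\tau_R)\|\ \le\ C_R\int_0^t E\,\|X(s\wedge\tau_R)-Y(s\wedge\tau_R)\|\,ds.
\]
Gronwall's inequality then forces $E\,\|X(t\wedge\tau_R)-Y(t\wedge\tau_R)\|=0$ for each $t$, hence $X(\cdot)=Y(\cdot)$ on $[0,\tau_R]$ almost surely. Finally, I would let $R\to\infty$; the linear growth hypothesis (A1)(iii) guarantees nonexplosion, so $\tau_R\to\infty$ almost surely and pathwise uniqueness follows on all of $[0,\infty)$.

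The main obstacle I anticipate is handling the local, rather than global, nature of the hypotheses. Assumption (A1)(ii) supplies the modulus-of-continuity bound only for $\|x-y\|\le\epsilon$ and inside a ball $B(0,R)$, while the Lipschitz bound on $\mu$ is only local; consequently the localization by $\tau_R$ must be set up carefully so that the constants $C_R$ stay fixed on each localized interval, and one must argue that the diffusion-term estimate via property (iii) still applies when $\|X-Y\|$ may temporarily exceed $\epsilon$. The cleanest way around this is to run the argument componentwise and exploit that the estimate $\phi_k''(X_i-Y_i)\,\rho^2(|X_i-Y_i|)\le \tfrac{2}{k}$ holds pointwise in $X_i-Y_i$ regardless of the size of the full difference, so that only the drift estimate genuinely requires the smallness/boundedness restriction, which the stopping-time localization already provides.
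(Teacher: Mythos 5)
Your overall strategy is the same as the paper's: apply It\^o's formula to $\bar\phi_k(X(t)-Y(t))$, kill the second--order term using $\phi_k''\le 2/(k\rho^2(|\cdot|))$ against the componentwise modulus in (A1)(ii), control the drift term by the local Lipschitz bound, let $k\to\infty$, and close with Gronwall and nonexplosion. However, your proposed resolution of the one obstacle you correctly identify --- the restriction $\|x-y\|\le\epsilon$ in (A1)(ii) --- does not work, and this is a genuine gap. The pointwise inequality $\phi_k''(X_i-Y_i)\,\rho^2(|X_i-Y_i|)\le 2/k$ is indeed valid for all values of $X_i-Y_i$, but it is useless on its own: what you actually need is the bound
\[
\sum_{j=1}^n\bigl(\sigma_{ij}(X)-\sigma_{ij}(Y)\bigr)^2\;\le\;C_R^2\,n\,\rho^2(|X_i-Y_i|),
\]
and this is supplied by (A1)(ii) \emph{only} when $\|X-Y\|\le\epsilon$. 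When $\|X(t)-Y(t)\|>\epsilon$ you have no control of the diffusion difference in terms of $\rho(|X_i-Y_i|)$; for instance $X_i-Y_i$ can be small (so $\phi_k''(X_i-Y_i)$ is of order $2/(k\rho^2(s_k))$, which is large) while the other components differ by an amount of order one, making $\sigma_{ij}(X)-\sigma_{ij}(Y)$ of order one. The product then need not vanish as $k\to\infty$, so the claim that ``only the drift estimate genuinely requires the smallness restriction'' is false, and your stopping time $\tau_R=\inf\{t:\|X(t)\|\vee\|Y(t)\|\ge R\}$ does not save the argument.

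The fix, which is what the paper does, is to build the $\epsilon$--constraint into the localization: take
\[
\tau_R=\inf\{t>0:\ |X_i(t)-Y_i(t)|\ge R \text{ for some } i \ \text{or}\ \|X(t)-Y(t)\|\ge\epsilon\},
\qquad
\tau=\inf\{t>0:\ \|X(t)-Y(t)\|\ge\epsilon\}.
\]
On $[0,\tau_R]$ both the drift and the diffusion estimates are legitimate, Gronwall gives $E\|X(t\wedge\tau_R)-Y(t\wedge\tau_R)\|=0$, and letting $R\to\infty$ (using nonexplosion) yields $X(t\wedge\tau)=Y(t\wedge\tau)$ for all $t$. One then needs a final step that your write-up omits entirely: showing $\tau=\infty$ a.s. This follows by contradiction --- on $\{\tau\le T\}$ the previous identity forces $X(\tau)-Y(\tau)=0$, while the definition of $\tau$ forces $\|X(\tau)-Y(\tau)\|=\epsilon>0$. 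Without this last step you have only proved uniqueness up to the first time the two solutions separate by $\epsilon$, which is not yet pathwise uniqueness.
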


\begin{proof}

Let $X_i(\cdot), \, i =1,2$ be two solutions of (\ref{sde1}) with a
 prescribed Wiener process $W(\cdot)$. Set
\[
X(t) \ = \ X_1(t) - X_2(t).
\]
Note that
\[
dX(t) \ = \ (\mu (X_1(t)) \, - \, \mu(X_2(t))) \, dt \ + \ (\sigma
(X_1(t)) \, - \, \sigma(X_2(t)) \, dW(t) \,
\]
Using Ito's formula, we have
\begin{multline} \label{s2sde2}
d \bar{\phi}_k(X(t)) 
=  \sum^n_{i=1}(\mu_i(X_1(t))
\, - \, \mu_i(X_2(t))) \, \phi^{'}_k(X^i(t)) \, dt \\
{\hspace*{5mm}} + \frac{1}{2} \sum^n_{i=1}
\tilde{m}_{ij}(X_1(t), X_2(t)) \, \phi^{''}_k(X^i(t)) \, dt \\
 + \sum^n_{i,j=1} (\sigma_{ij}(X_1(t)) \, - \,
\sigma_{ij}(X_2(t))) \, \phi^{'}_k (X^i(t)) \, d W_j(t),
\end{multline}
where
\[
(\tilde{m}_{ij}(x, y)) \ = \  [\sigma(x) - \sigma(y)][\sigma(x) -
\sigma(y)]^{\perp} \, .
\]
 Define
\[
 \tau_R = \ \inf \{ t > 0 \ | \ |X_i(t)| \geq R \ {\rm for\ some} \ i \ {\rm or} \
 \|X(t) \| \geq \epsilon \},
\]
and
\[
 \tau = \ \inf \{ t > 0 \ | \ \|X(t) \| \geq \epsilon \},
\]
where $\epsilon > 0$ is from  (A1)(ii). Observe that $\tau_R \to \tau$ 
as $R \to \infty$, since linear growth
conditions of the coefficients of the (\ref{sde1}) guarantees the
nonexplosion of any solution of (\ref{sde1}), see [\cite{nobu}, Theorem 2.4, pp. 163-164]. From (\ref{s2sde2})
we have after taking expectation
\begin{multline} \label{s2eq1}
E \bar{\phi}_k(X(t \wedge \tau_R))  \\
{\hspace*{6mm}}= \ \sum^n_{i=1}E \int^t_0 (\mu_i(X(s \wedge \tau_R)) \, - \,
\mu_i(X(s \wedge \tau_R))) \,
\phi^{'}_k(X^i(s \wedge \tau_R)) \, ds \\
+ \frac{1}{2} \sum^n_{i=1} E \, \int^t_0
\tilde{m}_{ij}(X_1(s \wedge \tau_R), X_2(s \wedge \tau_R)) \,
\phi^{''}_k(X^i(s \wedge \tau_R)) \, ds
\end{multline}
Now
\begin{multline} \label{s2eq2}
 \Big| \sum^n_{i=1}E \int^t_0 (\mu_i(X(s \wedge
\tau_R)) \, - \, \mu_i(X(s \wedge \tau_R))) \, \phi^{'}_k(X^i(s
\wedge \tau_R)) \, ds
\Big|  \\
\leq \ \sum^n_{i=1}E \int^t_0 \Big| \mu_i(X(s \wedge
\tau_R)) \, - \, \mu_i(X(s \wedge \tau_R))\Big| \,
\Big|\phi^{'}_k(X^i(s \wedge \tau_R)) \Big| \, ds  \\
 \leq \ K_R \, \int^t_0 E \| X(s \wedge \tau_R) \| \, ds,
\end{multline}
where $K_R > 0$ is the Lipschitz constant of $\mu$ in $B(0, R)$.
Now
\begin{equation} \label{s2eq3}
\left\{
\begin{array}{ll}
\displaystyle{ \Big|\frac{1}{2} \sum^n_{i=1} E \, \int^t_0
\tilde{m}_{ij}(X_1(s \wedge \tau_R), X_2(s \wedge \tau_R)) \,
\phi^{''}_k(X^i(s \wedge \tau_R)) \, ds \Big|} & \\
\displaystyle{  ~~~~~~ \leq \ \frac{1}{2} \sum^n_{i=1} \int^t_0 n
\rho^2(| X^i(s \wedge \tau_R)|) \, \frac{2}{k \, \rho^2(|X^i(s
\wedge \tau_R)|) } \, ds }& \\
~~~~~ \leq \ \frac{n^2 \, t}{k}. &
\end{array}
\right.
\end{equation}
Substituting (\ref{s2eq2}), (\ref{s2eq3}) in (\ref{s2eq1}) and
letting $ k \to \infty$ we have
\[
E \|X(t \wedge \tau_R) \| \ \leq \ K_R \, \int^t_0 E \|X(s \wedge
\tau_R) \| \, ds \, .
\]
Using Gronwall's lemma we now have
\[
E \|X(t \wedge \tau_R ) \|\ = \ 0, \ {\rm for\ all}\ t \, .
\]
Letting  $R \to \infty$ and  using Fatou's lemma, we get
\begin{equation}\label{s2eq4}
X(t \wedge \tau) \ = \ 0 \ {\rm for\ all}\ t.
\end{equation}
To complete the proof, it is enough to show that $\tau = \infty$
a.s. If $P \{ \tau < \infty \} \, > \, 0$, then there exists $T>
0$ such that $P \{ \tau \leq T \} > 0. $ Hence  it follows from
(\ref{s2eq4}) that on $\{ \tau \leq T \}$, $X (\tau) = 0$ which is
false in view of the definition of $\tau$. Hence $P \{ \tau <
\infty \} \, = \, 0$. This completes the proof. 
\end{proof}

We now prove the main result of this section.

\begin{theorem} \label{exist} Under the Assumption (A1), the SDE (\ref{sde1})
has a unique strong solution for all time.
\end{theorem}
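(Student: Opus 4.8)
The plan is to combine the pathwise uniqueness just established in the preceding Lemma with a weak existence result, and then invoke the Yamada--Watanabe principle, which asserts that pathwise uniqueness together with the existence of a weak solution yields the existence of a unique strong solution (see \cite{nobu}). Thus the only work remaining is to produce a weak solution of (\ref{sde1}) that is defined for all time.

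For weak existence, I would first observe that Assumption (A1) forces both coefficients to be continuous: the map $\mu$ is locally Lipschitz by (A1)(i), hence continuous, while (A1)(ii) gives the estimate $|\sigma_{ij}(x) - \sigma_{ij}(y)| \leq C_R\, \rho(|x_i - y_i|)$ with $\rho$ continuous, strictly increasing and $\rho(0)=0$, so each $\sigma_{ij}$ is continuous as well. Since by (A1)(iii) both $\mu$ and $\sigma$ have linear growth, the standard existence theorem for continuous coefficients (see \cite{nobu}, pp.\ 155--163) produces a weak solution up to an explosion time, and the linear growth condition rules out explosion (see \cite{nobu}, Theorem 2.4, pp.\ 163--164, already invoked in the Lemma). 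Consequently a weak solution of (\ref{sde1}) exists for all $t \geq 0$.

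Having weak existence for all time and pathwise uniqueness, the Yamada--Watanabe theorem then upgrades the weak solution to a strong one and guarantees its uniqueness, which completes the proof. The only point requiring a little care---and the one I would flag as the main obstacle---is the globalization step: one must confirm that the weak solution does not explode, so that the strong solution obtained through Yamada--Watanabe is genuinely defined on $[0,\infty)$ rather than only up to a stopping time. This is handled precisely by the linear growth hypothesis (A1)(iii) together with the non-explosion estimate cited above; beyond that, the argument is a direct application of the classical equivalence between weak existence with pathwise uniqueness on the one hand and strong existence with uniqueness on the other.
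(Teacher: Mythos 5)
Your proposal is correct and follows essentially the same route as the paper: pathwise uniqueness from the preceding Lemma, weak existence for all time from continuity plus linear growth of the coefficients (citing the same results of Ikeda--Watanabe), and the Yamada--Watanabe theorem to conclude. Your extra remark that (A1)(ii) forces continuity of $\sigma_{ij}$ is a small detail the paper leaves implicit, but it does not change the argument.
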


\begin{proof}
In view of Yamada-Watanabe theorem on pathwise uniqueness, \cite{yw}, [\cite{kara1}, pp.309-310],
to show the existence of a unique strong
solution, it is sufficient to show the pathwise uniqueness and existence of a 
weak solution. Lemma 2.1 guarantees the pathwise uniqueness. 
Since the drift 
and the diffusion coefficients are continuous and have linear growth,
the SDE (\ref{sde1}) has a weak solution 
for all time, see [\cite{nobu}, Theorem 2.3, Theorem 2.4, pp.159-164].  
This completes the proof of the theorem.  
\end{proof}

\begin{remark}
In the proof of  Lemma 2.1 (and hence in the proof of the existence of a unique 
strong solution to (2.1)), we extended the technique used for one dimension case
in \cite{yw} to the multidimentional case. This extension is based on the 
componentwise nature of the Assumption (A1)(ii). This technique will fail, if 
we replace the Assumption (A1)(ii)  with 
\[
 |\sigma_{ij}(x) \, - \, \sigma_{ij}(y) | \ \leq \ C_R \rho(\|x-y\|)
\]
for all $x,y \in B(0, \, R)$, with $\|x-y\| \leq \epsilon$.
\end{remark}

We now present an example satisfying the Assumption (A1).
\begin{example}
Consider the SDE
\[
\left\{
\begin{array}{lll}
d X_i(t) & = & \mu_i(X(t)) \, dt \ + \ |X_i(t)|^{\beta}
\sum^n_{j=1}
\sigma_{ij}(X_i(t)) d W_j(t) \\
X (0) & = & x \in \R^n, \ i=1,2, \cdots, n \, ,
\end{array}
\right.
\]
where $ \frac{1}{2} \leq \beta \leq 1$ and $\mu_i, \ \sigma_{ij}$
are Lipschtiz continuous.  Then by Theorem 2.1, the above SDE has
unique strong solution.
\end{example}

\section{SDE in Positive Orthrant}

In this section, we study the sde (\ref{sde1}) in the positive orthrant. Under certain
assumptions, we show that any solution starting from any point in the positive orthrant 
of the sde  remains in the positive orthrant for all time. 

We introduce some notation before stating the assumptions.

Let $\rho_1 : \R_{+} \to \R$ be a strictly increasing
function satisfying $\rho_1(0) = 0$ and $\int_{0 +}
\frac{1}{\rho^2_1(s)} \, ds \, = \, \infty$. 

Define $p_i:\R^n_+\rightarrow \R$,
  $a_i:\R^n_+\rightarrow
\R$ and $ b_i:\R^n_+\rightarrow\R, \ i =1, \cdots, n$ by
\begin{eqnarray*}
p_i(x)=  x_i, \  a_i(x) \, =  \,
m_{ii}(x),~~b_i(x)=\frac{\mu_i(x)}{a_i(x)} \, ,
\end{eqnarray*}
where $(m_{ij}(x))\ =\ \sigma(x) \sigma(x)^{\perp}$. Also define
\[
a^{+}_i(r)=\sup_{x:p_i(x)=r} a_i(x), \
b^{-}_i(r)=\inf_{x:p_i(x)=r}b_i(x).
\]

We are now ready to state our assumptions.  

\begin{itemize}
\item[{\bf (A2)}] The functions $\mu$, $\sigma$ are continuous
with linear growth. Also $a^{+}_i(r) < \infty$ for all $ r > 0$. \\

\item[{\bf(A3)(i)}]  The function $\mu_i(x)  >0$ for all $ x$
in a nbd of $\partial \R^n_{+}$, say ${\mathcal D}$ , $ i=1,2,\cdots,n.$

\item[{\bf (ii)}] The functions $b^{-}_i, \, i =1, 2, \cdots, n$ satisfies
 \[
 b^{-}_i(r) \ > \  \frac{1}{r} \  {\rm for\ all} \ r>0.
\]

\item[{\bf (iii)}] There exists a function $\tilde{\sigma} : [0, \
\infty) \to \R$ such that
\[
m_{ii}(x) \ \leq \ n^3 {\tilde{\sigma}}^2(p_i(x)) \ {\rm for\ all}\
x \in {\mathcal D} , \ i= 1, 2, \cdots, n \, ,
\]
where $\tilde{\sigma}$ satisfies the following :
there exists an $\epsilon > 0$ and a $C^1_R > 0$ which depends on $R$
such that for $r_1, \ r_2 \in [0, \ R]$ with $|r_1 - r_2| \leq \epsilon$
\[
\begin{array}{lll}
| {\tilde{\sigma}}^2 (r_1) - {\tilde{\sigma}}^2(r_2) | & \leq & C^1_R \,
| r_1 - r_2|  , \\
| \sqrt{r_1} \tilde{\sigma} (r_1) - \sqrt{r_2} \tilde{\sigma}(r_2)
| & \leq & C^1_R \, \rho_1(|r_1-r_2|) \, ,
\end{array}
\]
\end{itemize}

We are now ready to state and prove the main theorem of our article.

\begin{theorem}\label{The-nonnegative}
Assume (A2) and (A3). Let
$X(\cdot)=(X_1(\cdot),\cdots,X_n(\cdot))$ be a solution to
(\ref{sde1}) with the initial condition $X_i(0) = x_i
> 0, \ i=1,2,\cdots,n.$ Then with probability $1$, paths of
$X_i(\cdot)$ are positive, $i=1,2,\cdots,n$.
\end{theorem}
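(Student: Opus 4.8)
The plan is to exhibit a single nonnegative function on the open orthant that blows up at the boundary and is superharmonic for the generator of $X$, and to use it as a supermartingale that the solution cannot drive to $+\infty$. Let $\tau=\inf\{t>0:\ X(t)\in\partial\R^n_+\}$ be the first time some coordinate vanishes, and put $V(x)=\sum_{i=1}^n x_i^{-1}$ for $x\in\R^n_+$. Then $V\in C^2$ on the open orthant, $V(x)\to\infty$ exactly as $x\to\partial\R^n_+$, and $\{V\le m\}\subseteq\{\min_i x_i\ge 1/m\}$ for every $m>0$. I would prove $\tau=\infty$ almost surely by showing $\mathcal{L}V\le 0$ on $\R^n_+$, so that $V(X(\cdot))$ is a local supermartingale, and then ruling out attainment of $\{V=\infty\}$. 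The comparison is built into the choice of $V$: the summand $x_i^{-1}$ is superharmonic for the one-dimensional operator obtained by freezing the remaining coordinates precisely when the drift-to-diffusion ratio of the $i$-th component exceeds $1/x_i$, so the hypothesis $b_i^-(r)>1/r$ of (A3)(ii) is exactly the condition under which $X_i$ is dominated from below by a reference diffusion that does not reach $0$.

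The key computation is Ito's formula for $V$. Since $V$ is separable, only the diagonal entries $m_{ii}=a_i$ of $\sigma\sigma^\perp$ appear, and the drift is $\mathcal{L}V(x)=\sum_{i=1}^n\big(-\mu_i(x)x_i^{-2}+m_{ii}(x)x_i^{-3}\big)=\sum_{i=1}^n\frac{m_{ii}(x)}{x_i^{2}}\Big(\frac1{x_i}-b_i(x)\Big)$, where I have used $\mu_i=a_i b_i$ at points with $a_i>0$. At such points (A3)(ii) gives $b_i(x)\ge b_i^-(x_i)>1/x_i$, so each summand is $\le 0$. Where $a_i(x)=0$ the $i$-th summand reduces to $-\mu_i(x)x_i^{-2}$, which is $\le 0$: near $\partial\R^n_+$ this is immediate from the drift positivity (A3)(i), and the finiteness of $b_i^-$ demanded by (A3)(ii) prevents $\mu_i$ from being negative along the degenerate directions elsewhere. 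Hence $\mathcal{L}V\le 0$ on all of $\R^n_+$.

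To conclude I localize. On $[0,\tau)$ the path $t\mapsto V(X(t))$ is continuous and tends to $+\infty$ as $t\uparrow\tau$, so the stopping times $\tau_m=\inf\{t:V(X(t))\ge m\}$ increase to $\tau$. Up to $\tau_m$ the solution remains in $\{V\le m\}\subseteq\{\min_i x_i\ge 1/m\}$, a region on which, thanks to the linear growth in (A2), the martingale part of $V(X(\cdot))$ is a true martingale; together with $\mathcal{L}V\le 0$ this makes $V(X(\cdot\wedge\tau_m))$ a genuine supermartingale, whence $E\,V(X(t\wedge\tau_m))\le V(x)$. Because $V(X(t\wedge\tau_m))\ge m$ on $\{\tau_m\le t\}$, this yields $P(\tau_m\le t)\le V(x)/m$, and since $\{\tau\le t\}\subseteq\{\tau_m\le t\}$ for every $m$, letting $m\to\infty$ forces $P(\tau\le t)=0$ for all $t$. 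Thus $\tau=\infty$ a.s., i.e.\ every $X_i(\cdot)$ stays positive, the nonexplosion supplied by (A2) guaranteeing that the solution is defined throughout.

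The step I expect to be the real obstacle is the degeneracy of the diffusion at $\partial\R^n_+$. It is exactly the vanishing of $m_{ii}$ there that blocks a direct appeal to a classical one-dimensional comparison theorem: the effective diffusion coefficient $\sqrt{m_{ii}(X)}$ of $X_i$ depends on the whole vector and cannot be matched to that of any one-dimensional reference, so a strict pathwise comparison is unavailable. My route avoids this by comparing through the superharmonic function $V$, which needs only the sign of $\mathcal{L}V$; the price is the nonsharp threshold $b_i^->1/r$ (rather than the Feller value $1/(2r)$), reflected in the generous constant $n^3$ of (A3)(iii). I do not use the Holder hypotheses of (A3)(iii) for this positivity statement; they are what one needs in the companion results, where an actual pathwise comparison of $X_i$ with the one-dimensional Holder-$\tfrac12$ diffusion built from $\sqrt{r}\,\tilde\sigma(r)$ is made and, via Theorem \ref{exist}, a unique strong solution in the orthant is produced.
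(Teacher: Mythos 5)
Your argument is correct, but it is a genuinely different proof from the one in the paper. The paper fixes a coordinate $i$, performs a random time change $\phi(t)=\int_0^{t\wedge\tau}a_i(X(s))/a_i^+(p_i(X(s)))\,ds$ so that $p_i(Y(t))$ solves a one-dimensional SDE with diffusion coefficient $\sqrt{a_i^+}$, and then dominates it from below by the reference diffusion $dZ=\frac{a_i^+(Z)}{Z}dt+\sqrt{a_i^+(Z)}\,d\tilde W$ via the Ikeda--Watanabe comparison theorem; (A3)(iii) is needed there to show (through the substitution $Z_1=Z^2$ and Theorem \ref{exist}) that $Z$ has a unique strong solution, and a $\log Z$ local-martingale argument shows $Z$ never hits $0$. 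You instead take the Lyapunov function $V(x)=\sum_i x_i^{-1}$ and verify $\mathcal{L}V\le 0$ directly from $b_i(x)\ge b_i^-(x_i)>1/x_i$, turning $V(X(\cdot\wedge\tau_m))$ into a supermartingale and getting $P(\tau_m\le t)\le V(x)/m$. What your route buys: it is shorter, needs neither the time change (whose strict monotonicity is delicate where $a_i$ vanishes in the interior), nor the auxiliary $\hat\sigma$, nor the comparison theorem, nor (A3)(iii) or the finiteness of $a_i^+$ at all --- you only use (A3)(i),(ii) plus linear growth. What the paper's route buys is the explicit pathwise lower bound $Z(t)\le p_i(Y(t))$, which carries quantitative information about boundary behaviour beyond mere non-attainment. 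Two small points you should tighten: at interior points where $a_i(x)=0$ the sign of $-\mu_i(x)x_i^{-2}$ has to be extracted from the (somewhat informal) convention behind $b_i^-$, exactly as you indicate; and the supermartingale property up to $\tau_m$ requires one further localization at the exit from a large ball (or the standard second-moment bound $E\sup_{s\le t}\|X(s)\|^2<\infty$ furnished by linear growth), since $x_i^{-2}\sigma_{ij}(x)$ is bounded on $\{V\le m\}$ only after also bounding $\|x\|$. Both are routine.
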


\begin{proof} Fix $i$. Consider the process
$\hat{X}(\cdot)$ satisfying
\[
d \hat{X}(t) \ = \ \mu(\hat{X}(t)) \, dt \ + \ \hat{\sigma}(\hat{X}(t))\, d \,
W(t) \, ,
\]
where $\hat{\sigma}$ satisfies
\[
\hat{\sigma}(x) \ = \
\left\{
\begin{array}{lll}
\sigma(x) & {\rm if}& x \in {\mathcal D} \\
\tilde{\sigma}(p_i(x))& {\rm if}& x \in {\mathcal D}^c_1
\end{array}
\right.
\]
for a  nbd ${\mathcal D}_1$ of $\partial {\R}^n_{+}$ such
that ${\mathcal D} \subseteq {\mathcal D}_1$ and $\hat{\sigma}$
satisfies
\[
\sum^n_{k=1} \hat{\sigma}_{ik}(x) \, \hat{\sigma}_{ki}(x) \ \leq \ n^3 \, \tilde{\sigma}^2(p_i(x))
\ {\rm for} \ x \in {\mathcal D}_1 \setminus {\mathcal D} \, .
\]
Such a  $\hat{\sigma}$ can be constructed as follows:
Using Urysohn's lemma, there exists a continuous function $f : \R^n \to [0, \ 1]$ such that
$f(x) = 0$ on $\overline{{\mathcal D}}$ and $ f(x) \, = \, 1$ on ${\mathcal D}^c_1$. Now choose
\[
 \hat{\sigma}(x) \ = \ (1- f(x)) \sigma(x) + f(x) \tilde{\sigma}(p_i(x)) , \ x \in \R^n
\]

Note that the process $\hat{X}(\cdot)$ given above and the process $X(\cdot)$
given by \eqref{sde1} behave identicaly in some nbd of $\partial {\R}^n_{+}$.
Hence without the loss of generality we can assume that $\sigma \, = \ \hat{\sigma}$.

Set 
\[
\tau=\inf\{t\geq 0\, | \, X(s)\in \partial \R_+^n\}
\] 
and define
\begin{equation*}
\phi(t)=\int_{0}^{t\wedge
\tau}\frac{a_i(X(s))}{a^{+}_i(p_i(X(s)))}ds,
\end{equation*}
Then $\phi$ is pathwise smooth, strictly increasing and
$\phi(0)=0$. Let $\psi=\phi^{-1}.$ Set $Y_i(t)=X_i(\psi(t))$.
Using the time change of Brownian motion arguments as in
[\cite{nobu}, pp. 183-185], we can show that
$Y(\cdot):=(Y_1(\cdot),\cdots,Y_n(\cdot))$ satisfies
\begin{equation}\label{s3sde1}
\left\{
\begin{array}{lll}
dY_i(t) & = & \displaystyle{
\frac{a^+_i(p_i(Y(t)))}{a_i(Y(t))} \mu_i(Y(t))dt } \\
& & \displaystyle{
+\sqrt{\frac{a^+_i(p_i(Y(t)))}{a_i(Y(t))}}
\sum_{j=1}^{n}\sigma_{ij}(Y(t))d\bar{W}_j(t)}
\end{array}
\right.
\end{equation}
for some Brownian motion $\bar{W}(\cdot)$.\par

Using  Ito's formula to $p_i(\cdot)$ and the
process (\ref{s3sde1}), we obtain
\[
\left\{
\begin{array}{lll}
dp_i(Y(t)) & = & \displaystyle{
\frac{a^+_i(p_i(Y(t)))}{a_i(Y(t))}
\mu_i(Y(t))dt} \\
& & \displaystyle {
+\sqrt{\frac{a^+_i(p_i(Y(t)))}{a_i(Y(t))}} \sum_{j=1}^{n}
\sigma_{ij}(Y(t))d\bar{W}_j(t) }
\end{array}
\right.
\]
i.e.,
\begin{equation}\label{s3sed2}
dp_i(Y(t))=a^+_i(p_i(Y(t)))b(Y(t))dt+\sqrt{a^+_i(p_i(Y(t)))}d\tilde{W}(t),
\end{equation}
for some one-dimensional Brownian motion $\tilde{W}(\cdot)$.\\

 Now consider the following SDE
\begin{equation}\label{s3sde3}
dZ(t)=\frac{a^+_i(Z(t))}{Z(t)}dt+\sqrt{a^+_i(Z(t))}d\tilde{W}(t).
\end{equation}
By applying Ito's formula we can see that $Z(\cdot)$ is a solution
to \eqref{s3sde3} iff $Z_1(\cdot)$ is a solution to
\begin{equation}\label{s3sde4}
dZ_1(t)= 3 \, a^+_i(\sqrt{Z_1(t)}) \, dt \, + \, 2
\sqrt{Z_1(t)}\sqrt{a^+_i(\sqrt{Z_1(t)})} \, d\tilde{W}(t).
\end{equation}
Using (A3)(iii),
we have
\[
a^{+}_i (r) \ = \ n^3 {\tilde{\sigma}}^2(r) , \ r > 0 \, .
\]
Observe that the equation \eqref{s3sde4} satisfies all the assumptions in
the Theorem 2.1. Thus \eqref{s3sde4} has a unique strong solution. As a 
consequence the equation \eqref{s3sde3} will also have a unique strong
solution for all time.

Now take
\[
b^1(r) \ = \ a^{+}_i(r) \, b^{-}(r) , \ b^2(r) \ = \
\frac{a^{+}(r)}{r} \, .
\]
Then
\[
b^1(r) \ > \ b^2(r) , \ {\rm for\ all}\ r > 0
\]
and
\[
a^{+}(p_i(Y(t)) \, b(Y(t)) \, \geq \ b^{1}(p_i(Y(t))) \, .
\]
Here note that $b^1(0) $ may or may not be equal to $b^2(0)$. If
$b^1(0) > b^2(0)$, then a direct application of
 comparison theorem, see [ \cite{nobu}, p.437], we have
\begin{equation} \label{s3esti1}
Z(t)\ \leq \ p_i(Y(t)) \ \mbox{a.s.}
\end{equation}
If $b^1(0) \ = \ b^2(0)$, then apply comparison theorem up to the
stopping time $\eta_n \ = \ \inf \{\,  t > 0 \ | \ p_i(Y(t) \leq
\frac{1}{n} \ {\rm or}\ Z(t) \leq \frac{1}{n} \, \}$ and let $n
\to \infty$, we have (\ref{s3esti1}). \\

Set $M(t)=\log Z(t).$ Then Ito's
formula implies that
\begin{equation*}
dM(t)=\frac{1}{Z(t)}\sqrt{a^+(Z(t))}d\tilde{W}(t)
\end{equation*}
i.e. for $\eta =\sup\{s\geq0||M(s)|<\infty\}$, $M(t)=\log \, Z(t)$
defines a local Martingale in $[0,\eta)$. Now using Corollary
34.13 of [\cite{row}, p.67],  on $\{\eta<\infty\}$ we have
\begin{equation*}
\overline{\lim}_{t\uparrow\eta} M(t)=\infty
\end{equation*}
i.e.
\begin{equation*}
\overline{\lim}_{t\uparrow\eta} Z(t)=\infty.
\end{equation*}
Now since (\ref{s3sde3}) has a unique strong solution for all $ t
\geq 0$, we have $\{\eta<\infty\}$ has probability $0$. That is
$Z(t)>0$ a.s., for all $t\geq 0$. Hence from (\ref{s3esti1}) it
follows that $Y_i(\cdot) > 0 $ a.s.
\end{proof}

\section{Multidimensional Variant of Bessel Process}

In this section we prove the existence of unique strong solution
to the SDE 
\begin{equation} \label{s4sde1}
\left\{
\begin{array}{lll}
dX_i(t) & = & \mu_i(X(t)) \, dt \ + \ \sqrt{|X_i(t)|}
\sum^n_{j=1}
\sigma_{ij}(X(t)) \, d W_j(t) \\
X(0) & = & x \in \R^n_{+},
\end{array}
\right.
\end{equation}
The SDE (\ref{s4sde1}) can be seen as  the multidimensional variant of the Bessel process
given by
\[
 d X(t)  \ = \ c \, dt \, + \, 2 \sqrt{|X(t)|} d W(t), \ X(0) \, = \, x \geq 0 \, .
\]
Positive Diffusion processes play a central role in Mathematical finance due to
the arbitragefree nature of it, see for example \cite{delhir}. 
The equation (\ref{s4sde1}) is studied in \cite{siva, basp} in connection with
super-Markov chains. Pathwise uniqueness is posed as an open problem in \cite{siva}. 
Pathwise uniquness results for the SDE with non-Lipschtiz coefficients are very limited
in the multidimensional case. Fang and Zhang in \cite{fz} provided 
pathwise uniqueness results for a general class of SDEs.  But these resutls can not 
be applied for SDEs whose coefficients are  Holder continuous. Thus \eqref{s4sde1} can not
be studied using the results in \cite{fz}. Our object in this section is to study this equation.
We first prove a general theorem.

\begin{theorem} Assume that $\mu$ is locally Lipschitz in $\R^n_{+}$ with linear
growth and satisfies (A3)(i), (ii). Also assume that $\sigma$ is
locally Lipschitz in $\R^n_{+}$ with linear growth and satisfies (A3)(iii). Then the
SDE (\ref{sde1}) has a unique strong positive solution.
\end{theorem}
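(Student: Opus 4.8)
The plan is to deduce this theorem by combining the positivity statement of Theorem~\ref{The-nonnegative} with a localization argument that exploits local Lipschitz continuity in the \emph{open} orthant, and then to invoke Yamada--Watanabe. I would deliberately \emph{not} route through Theorem~\ref{exist}, since the componentwise H\"older hypothesis (A1)(ii) need not hold here: as the Remark already notes, a bound of the form $\rho_1(\|x-y\|)$ is insufficient, and for the SDE \eqref{s4sde1} the product $\sqrt{|x_i|}\,\sigma_{ij}(x)$ is only Lipschitz in $\|x-y\|$, not in $|x_i-y_i|$ alone. The whole scheme must therefore keep the process strictly inside $\R^n_+$, where the coefficients are genuinely Lipschitz.

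First I would establish positivity. Under the stated hypotheses $(\mu,\sigma)$ satisfies (A3), and after the harmless modification of $\sigma$ outside a neighbourhood of $\partial\R^n_+$ used in the proof of Theorem~\ref{The-nonnegative} (which replaces $\sigma$ by $\hat\sigma$ with $\hat m_{ii}\le n^3\tilde\sigma^2(p_i)$ everywhere and hence guarantees $a^{+}_i(r)<\infty$, i.e.\ (A2)), Theorem~\ref{The-nonnegative} applies. Since the modification leaves the coefficients unchanged near $\partial\R^n_+$, it does not affect whether a path can reach the boundary; hence every solution $X$ of \eqref{sde1} with $X(0)=x\in\R^n_+$ satisfies $X_i(t)>0$ for all $t$, almost surely.

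Next I would prove pathwise uniqueness. Let $X^1,X^2$ solve \eqref{sde1} on a common Wiener space with $X^1(0)=X^2(0)=x\in\R^n_+$. For $m\in\mathbb N$ set
\[
\tau_m=\inf\Big\{t\ge 0 : \min_{i}\big(X^1_i(t)\wedge X^2_i(t)\big)\le \tfrac1m \ \text{ or }\ \|X^1(t)\|\vee\|X^2(t)\|\ge m\Big\}.
\]
On $[0,\tau_m]$ both processes take values in the compact set $K_m=\{x:\,x_i\ge 1/m,\ \|x\|\le m\}\subset\R^n_+$, on which $\mu$ and $\sigma$ are Lipschitz; the standard It\^o-plus-Gronwall estimate for $E\|X^1(t\wedge\tau_m)-X^2(t\wedge\tau_m)\|^2$ then forces $X^1=X^2$ on $[0,\tau_m]$. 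By the positivity established above the two paths never reach $\partial\R^n_+$, and by linear growth they do not explode, so $\tau_m\uparrow\infty$ almost surely; letting $m\to\infty$ yields pathwise uniqueness.

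Finally I would supply weak existence and conclude. Extending $\mu,\sigma$ to continuous functions of linear growth on all of $\R^n$, the extended SDE has a weak solution for all time by the continuity-and-linear-growth results cited for Theorem~\ref{exist} [\cite{nobu}, Thms.\ 2.3, 2.4]; Theorem~\ref{The-nonnegative} keeps this solution in $\R^n_+$ (by the same reasoning as in the first step), where it agrees with the original coefficients, so it is a weak solution of \eqref{sde1}. Pathwise uniqueness together with weak existence then gives, via Yamada--Watanabe exactly as in the proof of Theorem~\ref{exist}, a unique strong solution, which is positive. The main obstacle is precisely the boundary: local Lipschitz continuity degenerates on $\partial\R^n_+$, so uniqueness cannot be read off directly, and the entire argument hinges on first proving---through the comparison argument of Theorem~\ref{The-nonnegative}---that the process never reaches the boundary, thereby confining it to compacta of $\R^n_+$ on which the classical Lipschitz theory is available.
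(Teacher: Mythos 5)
Your proposal is correct and follows essentially the same route as the paper: pathwise uniqueness from the local Lipschitz property on compacta of the open orthant (valid up to the boundary hitting time $\tau$), Theorem~3.1 to conclude $\tau=\infty$ a.s.\ so that uniqueness holds for all time, weak existence from continuity and linear growth, and Yamada--Watanabe to finish. The paper's proof is just a terser version of this argument; your added care about verifying (A2) via the $\hat\sigma$ modification and about the localization stopping times fills in details the paper leaves implicit.
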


\begin{proof} Since the coefficients of the SDE
(\ref{sde1}) are locally Lipschtiz continuous with linear
growth, using classical result of Ito, existence of unique weak
solution for all time and pathwise uniqueness up to hitting time $\tau \ = \
\inf \{ t > 0 \, | \, X(t) \in \partial \R^n_{+} \}$ follows. Now
using Theorem 3.1, we have $\tau \, = \, \infty$ a.s. Hence the
SDE (\ref{sde1}) has pathwise uniqueness for all time. Hence
using Yamada-Watanabe theorem \cite{yw}, we can conclude the existence of
unique strong solution to (\ref{sde1}) for all time. 
\end{proof}

We now study \eqref{s4sde1} and prove the existence and uniquenss in the following
theorem.

\begin{theorem} Assume that $\mu$ is locally Lipschitz with linear
growth and satisfies (A3)(i), (ii). Also assume that $\sigma$ is
bounded and locally Lipschitz, then the SDE (\ref{s4sde1}) has a
unique positive strong solution.
\end{theorem}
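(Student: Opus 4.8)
The plan is to read (\ref{s4sde1}) as a particular instance of the general equation (\ref{sde1}) and then to invoke the preceding theorem of this section (the general positivity-and-strong-existence result, Theorem 4.1). Writing $\Sigma_{ij}(x) = \sqrt{|x_i|}\,\sigma_{ij}(x)$, the system (\ref{s4sde1}) is exactly (\ref{sde1}) with drift $\mu$ and diffusion matrix $\Sigma$. Hence it suffices to check that the pair $(\mu,\Sigma)$ satisfies all the hypotheses required by Theorem 4.1: that $\mu$ is locally Lipschitz in $\R^n_+$ with linear growth and satisfies (A3)(i),(ii); and that $\Sigma$ is locally Lipschitz in $\R^n_+$ with linear growth and satisfies (A3)(iii). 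The conditions on $\mu$ are assumed verbatim in the statement, so the entire content of the argument is the verification for $\Sigma$, and in particular the construction of an admissible $\tilde\sigma$.

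For the regularity and growth of $\Sigma$: on the open orthant $\R^n_+$ the map $x \mapsto \sqrt{x_i}$ is smooth, hence Lipschitz on every compact subset of $\R^n_+$ (there $x_i$ is bounded away from $0$); since $\sigma$ is locally Lipschitz and bounded, the product $\Sigma_{ij} = \sqrt{x_i}\,\sigma_{ij}$ is locally Lipschitz in $\R^n_+$. For linear growth, set $C = \sup_{i,j,x}|\sigma_{ij}(x)| < \infty$; then $|\Sigma_{ij}(x)| \le C\sqrt{|x_i|} \le C(1 + \|x\|)$, so $\Sigma$ has linear growth.

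The crux is the verification of (A3)(iii). Put $M(x) = \Sigma(x)\Sigma(x)^\perp$; then $M_{ii}(x) = \sum_{j=1}^n \Sigma_{ij}(x)^2 = |x_i|\sum_{j=1}^n \sigma_{ij}(x)^2 \le n C^2 |x_i| = nC^2\,p_i(x)$. Defining $\tilde\sigma(r) = \frac{C}{n}\sqrt{r}$ gives $\tilde\sigma^2(r) = \frac{C^2}{n^2}r$ and hence $M_{ii}(x) \le n^3\tilde\sigma^2(p_i(x))$ for all $x$ and all $i$. The two required moduli are now immediate: since $\tilde\sigma^2(r) = \frac{C^2}{n^2}r$ is linear, $|\tilde\sigma^2(r_1) - \tilde\sigma^2(r_2)| = \frac{C^2}{n^2}|r_1 - r_2|$, giving the Lipschitz bound; and since $\sqrt{r}\,\tilde\sigma(r) = \frac{C}{n}r$ is also linear, $|\sqrt{r_1}\tilde\sigma(r_1) - \sqrt{r_2}\tilde\sigma(r_2)| = \frac{C}{n}|r_1 - r_2|$, so the second bound holds with the elementary choice $\rho_1(s) = s$, which satisfies $\rho_1(0)=0$, is strictly increasing, and has $\int_{0+}\frac{1}{\rho_1^2(s)}\,ds = \infty$. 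I expect this to be the only genuine obstacle, and it is precisely the boundedness of $\sigma$ that makes it go through: boundedness collapses $M_{ii}(x)$ to a function of $p_i(x)=x_i$ alone, and simultaneously forces the degenerate factor $\sqrt{r}\,\tilde\sigma(r)$ to be Lipschitz rather than merely H\"older, so that $\rho_1$ may be taken linear. With all hypotheses of Theorem 4.1 verified for $(\mu,\Sigma)$, that theorem applies directly and produces a unique positive strong solution of (\ref{s4sde1}), which completes the proof.
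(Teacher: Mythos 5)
Your proposal is correct and follows essentially the same route as the paper: reduce to Theorem 4.1 and verify (A3)(iii) via the bound $m_{ii}(x)=x_i\sum_j\sigma_{ij}^2(x)\le nC^2 p_i(x)=n^3\tilde{\sigma}^2(p_i(x))$ with $\tilde{\sigma}(r)$ a constant multiple of $\sqrt{r}$ and $\rho_1(s)=s$. You merely spell out a few checks (local Lipschitzness and linear growth of the full diffusion coefficient, and the two moduli in (A3)(iii)) that the paper leaves implicit.
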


\begin{proof} Inview of Theorem 4.1, it only remains to show that
$\sigma$ satisfies (A3)(iii).
Since  $\sigma$ is bounded, locally  Lipschitz
continuous, we have
\[
\begin{array}{lll}
m_{ii}(x) & = & \displaystyle{ \sum^n_{j=1} x_i  \, \sigma^2_{ij}(x) }\\
     & \leq & n \, K \, x_i \ = \ n^3 \frac{K}{n^2} p_i(x),
 \end{array}
\]
where $K > 0$ is a bound for $\sigma$. Hence $\sigma$ satisfies
(A3)(iii).  
\end{proof}

\section{SDE in Unit Ball}

Our aim in this section is to show that the method developed in Section 3 
can be adapted to study equations evolving in domains other than positive 
orthrant. We restrict our attention to a variant of an SDE in unit ball 
studied by Swart in \cite{swart}.

Consider the SDE
\begin{equation}\label{s4sde2}
\left\{
\begin{array}{lll}
d \, X(t) & = & c (\theta - X(t)) dt \, + \, \sqrt{2 (1- \|X(t)\|^2)} \, d W(t) \\
X(0) & = & x \in B(0, 1) \subseteq \R^n \, .
 \end{array}
\right.
\end{equation}
The SDE (\ref{s4sde2}) is studied in \cite{swart} with $\theta = 0$. He showed the 
existence of a unique strong solution for $\theta =0$ and $ c \geq 1$. 

We study the equation \eqref{s4sde2} for any $\theta$ with the assumption that\\
$c ( 1 - \sqrt{n} \, |\theta |) \geq 2$.

We now prove our result in the following theorem.

\begin{theorem}
 Assume   $c ( 1 - \sqrt{ n} \, |\theta |) \geq 2$. 
Then the SDE (\ref{s4sde2}) has unique strong solution. More over solution evolves in
$B(0, 1)$. 
\end{theorem}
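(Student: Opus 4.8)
The plan is to follow the scheme of Theorem \ref{The-nonnegative}: first establish weak existence together with pathwise uniqueness up to the time the solution reaches the boundary, then show by a one–dimensional radial comparison that the boundary is never reached, and finally invoke the Yamada–Watanabe theorem \cite{yw}. In the open ball $B(0,1)$ the diffusion coefficient $\sqrt{2(1-\|x\|^2)}$ is smooth (since $1-\|x\|^2>0$ there) and the drift $c(\theta-x)$ is globally Lipschitz, so the coefficients are locally Lipschitz on $B(0,1)$; extending $\sigma$ continuously to $\R^n$ (for instance by $\sqrt{2(1-\|x\|^2)^{+}}$) keeps them continuous with linear growth, so a weak solution exists for all time by [\cite{nobu}, Theorems 2.3, 2.4], while pathwise uniqueness holds up to $\tau=\inf\{t\ge 0:\ X(t)\in\partial B(0,1)\}$ by classical Ito theory. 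Everything then reduces to proving $\tau=\infty$ a.s.

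First I would pass to the radial process $r(t)=\|X(t)\|^2$. Ito's formula gives
\[
dr(t)=\big(2c\langle X(t),\theta\rangle-2c\,r(t)+2n(1-r(t))\big)\,dt+2\sqrt{2(1-r(t))}\,\langle X(t),dW(t)\rangle,
\]
and the quadratic variation of the martingale part is $8\,r(t)(1-r(t))\,dt$, which depends on $X$ only through $r$. Hence by Levy's characterisation there is a one–dimensional Brownian motion $\tilde W$ such that the martingale part equals $\int_0^t\sqrt{8r(1-r)}\,d\tilde W$. Writing $s(t)=1-r(t)=1-\|X(t)\|^2$ and using Cauchy–Schwarz together with $\|\theta\|\le\sqrt n\,|\theta|$ to bound $\langle X,\theta\rangle\le\sqrt{1-s}\,\sqrt n\,|\theta|$, the drift of $s$ is bounded below by
\[
\beta(s)\ :=\ 2c(1-s)-2c\sqrt{1-s}\,\sqrt n\,|\theta|-2ns,\qquad \beta(0)=2c\big(1-\sqrt n\,|\theta|\big)\ge 4,
\]
the last inequality being exactly the hypothesis $c(1-\sqrt n\,|\theta|)\ge 2$. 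Thus $s$ solves an SDE whose diffusion coefficient $\sqrt{8s(1-s)}$ is a function of $s$ alone and whose drift dominates $\beta(s)$.

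Next I would introduce the auxiliary one–dimensional equation
\[
dZ(t)=\beta(Z(t))\,dt+\sqrt{8Z(t)(1-Z(t))}\,d\tilde W(t),\qquad Z(0)=1-\|x\|^2>0,
\]
and, exactly as in the proof of Theorem \ref{The-nonnegative}, apply the comparison theorem [\cite{nobu}, p.437] in the one–sided form (localising near $0$ by stopping times $\eta_n=\inf\{t:\ Z(t)\wedge s(t)\le 1/n\}$ if the two drifts coincide at $0$) to conclude $Z(t)\le s(t)$ a.s. Here $\sqrt{8s(1-s)}$ is Holder continuous of exponent $\tfrac12$ on $[0,1]$, so it meets the Yamada condition required by the comparison theorem. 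It then suffices to show $Z(t)>0$ for all $t$. Near $0$ one has $\beta(0)\ge 4$ and $8Z(1-Z)\approx 8Z$, so $Z$ behaves like a CIR/squared–Bessel process for which $0$ is an inaccessible boundary precisely when $2\beta(0)\ge 8$, i.e.\ $\beta(0)\ge 4$; this is the Feller threshold, met with equality under our hypothesis. Equivalently, as in the passage from \eqref{s3sde3} to \eqref{s3sde4} and the use of $M=\log Z$, one transforms $Z$ into a process that cannot reach the bad boundary in finite time via [\cite{row}, Corollary 34.13]. Consequently $s(t)=1-\|X(t)\|^2>0$ for all $t$, whence $\tau=\infty$ a.s., pathwise uniqueness holds for all time, and Yamada–Watanabe yields a unique strong solution evolving in $B(0,1)$.

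The main obstacle is this final non–attainment step: verifying that $0$ is genuinely inaccessible for $Z$ at the critical value $\beta(0)=4$, where the Feller condition holds only with equality and the estimate carries no slack. A secondary point requiring care is that the lower bound $\beta(s)$ on the drift of $s$ is obtained only after controlling the non–radial term $\langle X,\theta\rangle$ uniformly; it is this control that forces the factor $\sqrt n\,|\theta|$ and hence pins down the precise form of the hypothesis $c(1-\sqrt n\,|\theta|)\ge 2$.
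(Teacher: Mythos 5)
Your proposal is correct and follows essentially the same route as the paper: reduce to the one--dimensional radial process, compare it with an autonomous scalar diffusion with diffusion coefficient $\sqrt{8z(1-z)}$, rule out attainment of the boundary by Feller's test, and conclude by Yamada--Watanabe. The differences are cosmetic (you work with $s=1-\|X\|^2$ rather than $p=\|X\|^2$, and you correctly observe that the time change of Theorem \ref{The-nonnegative} is the identity here because $a(x)=8p(x)(1-p(x))$ already depends only on $p(x)$), and your Cauchy--Schwarz bound $\langle X,\theta\rangle\le\sqrt{n}\,|\theta|\,\sqrt{1-s}$ is the exact analogue of the paper's passage to $b^{+}$. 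Two small points. First, your comparison drift $\beta(s)=2c(1-s)-2c\sqrt{n}\,|\theta|\sqrt{1-s}-2ns$ is only H\"older-$\tfrac12$ at $s=1$, so the comparison theorem of [\cite{nobu}, p.~437] (which needs pathwise uniqueness, hence an Osgood-type drift, for one of the two equations) does not apply verbatim; this is patched by replacing $\beta$ with an affine minorant, e.g.\ $\tilde\beta(s)=2c(1-\sqrt{n}\,|\theta|)-2(c+n)s\le\beta(s)$, which still has $\tilde\beta(0)\ge 4$ --- this linearisation is in effect what the paper's comparison equation \eqref{s4sde4} is. Second, the step you single out as the main obstacle is not one: at the critical exponent the scale density near $0$ behaves like $u^{-\tilde\beta(0)/4}$ with $\tilde\beta(0)/4\ge 1$, and $\int_{0+}u^{-1}\,du=\infty$, so the Feller test (Proposition 5.22 of \cite{kara1}, as used in the paper) yields inaccessibility of $0$ with equality included, exactly as for the squared Bessel process of dimension $2$. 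With these two remarks your argument closes.
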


\begin{proof} In view of the arguments in the proof of Theorem 4.1, it suffices to show that
any solution to (\ref{s4sde2}) does not hit the boundary of $B(0, 1)$.

Let $X(\cdot)$ be a solution to (\ref{s4sde2}) corresponding to the Wiener process $W(\cdot)$. 
We can assume without the loss of generality that $ X(0) \, = \, x \neq 0$. Define
\[
 p(x) \, = \, \|x\|^2,  \ a(x) \, = \, 8 \, p(x) \, (1 - p(x))
\]
and
\[
b(x) \, = \, \frac{2 [n - (n+c) p(x)] \, + \, 2 c \, \theta \sum^n_{i=1} x_i}{a(x)}, \ x \in \R^n \, .
\]

Set
\[
 a^{+}(r) \ = \ \sup_{p(x) = r} a(x), \ b^{+}(r) \ = \ \sup_{p(x) = r} b(x) , \ r > 0 \, .
\]
Then
\[
 a^{+}(r) \ = \ 8 r (1-r), \ b^{+}(r) \ =  \ \frac{2 [n -(n+c(1- \sqrt{n}\, |\theta| ))r]}{a^{+}(r)} \, .
\]
Let $ \tau \ = \ \inf \{ t \geq 0 \, | \, \|X(t)\| \geq 1 \} $ and
\[
 \phi (t) \ = \ \int^{ t \wedge \tau}_0 \frac{ a (X(s))}{a^{+}(p(X(s))} \, ds \, .
\]
Now mimicking the argumants from Theorem 3.1, there exists one dimensional Wiener 
process $\tilde{W}(\cdot)$ such that $Y(\cdot)$ given by $Y(t) \ = \ X(\psi(t)), \ 
\psi  \ = \ \phi^{-1}$ satisfies 

\begin{equation}\label{s4sde3}
\left\{
\begin{array}{lll} 
d p(Y(t)) & = & a^{+}(p(Y(t)))\,  b(Y(t))  dt  +  \sqrt{8 \, p(Y(t)) (1 - p(Y(t)))} d \tilde{W}(t)\\
p(Y(0)) & = & p(x) \, ,
\end{array} \right.
\end{equation}

Consider the SDE 
\begin{equation}\label{s4sde4}
\left\{
\begin{array}{lll} 
d Z(t) & = & 
2[ n - (n+c(1- \sqrt{n} \,  |\theta |)) Z(t)]  dt  +  \sqrt{8 Z(t) (1 - Z(t))} 
d \tilde{W}(t)\\
Z(0) & = & p(x) \in (0, \ 1) \, ,
\end{array} \right.
\end{equation}
Since $\sqrt{ 8 x (1-x)}$ is Holder continuous with exponent $\frac{1}{2}$, the SDE (\ref{s4sde4})
has unique strong solution. Also note that for $n \geq 2$
\[
\begin{array}{lll}
\displaystyle{ \int^x_{\frac{1}{2}} e^{- 2 \, \int^y_{\frac{1}{2}} 
\frac{2[n - (n+c(1- \sqrt{n} |\theta |))u]}{8 u(1-u)}\, du}
\, dy } & = & \displaystyle{2^{\frac{c(\sqrt{n} |\theta| -1) - n}{2}} \, 
\int^x_{\frac{1}{2}} y^{- \frac{n}{2}} \, (1-y)^{- \frac{c(1- \sqrt{n} |\theta |)}{2}} \, dy }\\
& = & \infty
\end{array}
\]
By [\cite{kara1}, Proposition 5.22, p. 345], we have $0 < Z(t) < 1$ a.s. for $t$. Now note that
\[
 a^{+}(p(y)) b(y) \ \leq \ a^{+}(p(y)) \, b^{+}(p(y)) \ = \ 2 [n- (n+ c(1- \sqrt{ n} |\theta |)) p(y)] \, . 
\]
Hence using comparison theorem, [ \cite{nobu}, p. 437], we have
\begin{equation}\label{s5eq1}
 p(Y(t) ) \ \leq \ Z(t) \ < 1 \ {\rm a.s}\ \forall t \, 
\end{equation}
i.e $\|Y(t)\| < 1$ a.s. for all $t$. Hence the process $X(\cdot)$ never hits the boundary of
$B(0, \, 1)$. 

\end{proof}

\begin{remark} From (\ref{s5eq1}),  the process $X(\cdot)$ given by SDE (\ref{s4sde2}) 
can be in boundary only when the process $Z(\cdot)$ given by (\ref{s4sde4}) is at $1$.
Hence by looking at the behaviour of $Z(\cdot)$ at $1$, we can get a sufficient condition for
the process $X(\cdot)$ spending zero time on the boundary. Now we can apply the argument of
\cite{swart} to show the pathwise uniqueness of (\ref{s4sde2}) under the case when (\ref{s4sde2})
spends zero time on the boundary.

\end{remark}

\section{Conclusion}

In this article, we prove the existence of a unique strong
solution for a class of multidimensional SDEs with non-Lipschitz
diffusion coefficients. The analogous result for the
one-dimensional case was known for a long time but the
multidimensional version wasn't available. We prove the result
under the assumptions (A2) and (A3). The proofs of our results
are based on  comparison theorem arguments. We believe that it is
for the first time  that comparison theorems are exploited to
derive such results. \\

\noindent {\it Acknowledgements.} I would like thank Gopal K. Basak
for pointing out an error in a preliminary draft of this paper. I
also wish to thank  Mrinal K. Ghosh and K. S. M. Rao for giving me useful
suggestions to improve the writeup.

\bibliography{ref}
\bibliographystyle{plane}

\end{document}